\documentclass[11pt,notitlepage]{article}
\usepackage{amsmath,amsthm}
\usepackage{amssymb}
\usepackage{color}
\usepackage{graphicx}
\usepackage[english]{babel}

\theoremstyle{plain}
     \newtheorem{theorem}{Theorem}[section] 
     \newtheorem{conjecture}[theorem]{Conjecture}
     
     \newtheorem{lemma}[theorem]{Lemma}
     
\theoremstyle{definition}
     \newtheorem{definition}[theorem]{Definition}
     
     \newtheorem{remark}[theorem]{Remark}
 \theoremstyle{definition}

\providecommand{\R}{\mathbb{R}}
\providecommand{\norm}[1]{\lVert#1\rVert}
\author{Tiham\'er A. Kocsis and Adri\'an N\'emeth}
\title{Optimal second order diagonally implicit SSP Runge--Kutta methods}

\begin{document}
\maketitle
\begin{abstract}
Optimal Strong Stability Preserving (SSP) Runge--Kutta methods
has been widely investegated in the last decade and many open conjectures have been formulated. 
The iterated implicit midpoint rule has been observed numerically optimal in large classes of second order methods, 
and was proven to be optimal for some small cases, but no general proof was known so far to show its optimality. 
In this paper we show a new approach to analytically investigate this problem and determine the unique optimal methods in
the class of second order diagonally implicit Runge--Kutta methods.
\end{abstract}
\section{Introduction}

In this paper we investigate such numerical methods that were designed for the solution of initial value problems, such that they preserve
certain qualitative properties of the exact solutions of the differential equations with large stepsizes. Many different properties were studied in the literature such as positivity, contractivity, monotonicity, strong stability preservation and total variation diminishing (TVD) property. The first milestone was Bolley and Cruzeix's paper \cite{bolley_crouzeix}, where they proved that general linear methods cannot preserve positivity on linear problems with arbitrary large stepsize, unless they are at most first order accurate (essentially only the Backward Euler method can reach reach this infinite stepsize). 

Contractivity preservation for nonlinear systems was heavily studied by Spijker \cite{spijker_1983} proving the same order barrier for unconditional contractivity as Bolley and Crouzeix, and by van de Griend and Kraaijevanger \cite{vdg_kraaij}, Kraaijevanger \cite{kraaij} deriving computable algebraic conditions to calculate the largest feasible stepsize of the methods. Similar computable stepsize conditions were obtained by Shu and Osher for the TVD property \cite{shu_osher}, and they were investigated by Ferracina and Spijker \cite{ferr_spijker_2004, ferr_spijker_2005} and for strong stability preservation (SSP) by Gottlieb et al. \cite{gottlieb_shu_tadmor}, Gottlieb \cite{gottlieb_2005}, Higueras \cite{hig_2004, hig_2005} and for positivity preservation by Horv\'ath \cite{hz_farkas}. Stepsize conditionds for diagonally split Runge--Kutta (DSRK) methods were studied by Horv\'ath \cite{hz_1998} for positivity and numerical investigations for SSP DSRK methods were done by Macdonald et al. \cite{macd_gottlieb_ruuth}.

Extensive numerical searches were done to find optimal higher order methods with the largest stepsizes in certain classes of Runge--Kutta methods by Gottlieb and Shu \cite{gottlieb_shu},
Spiteri and Ruuth \cite{spiteri_ruuth}, Ruuth \cite{ruuth} for explicit methods. Ferracina and Spijker \cite{ferr_spijker_2008} studied SSP singly-diagonally-implicit Runge--Kutta methods and 
found the optimal methods with largest stepsizes. However, most of their optimal methods were numerically found and only a small fraction of them was proved to be optimal analytically. 
They also gave an explicit formula for the conjectured optimal coefficients for second and third order methods, and they showed that the second order method can be formulated as iterated implicit midpoint rules. Ketcheson et al. \cite{ketch_macd_gottlieb} executed a search in an even broader class of Runge--Kutta methods and the results suggested that the optimal second order methods in the class of (fully) implicit Runge--Kutta methods (IRK) is always a diagonally implicit Runge--Kutta (DIRK) method, exactly the same methods as found in \cite{ferr_spijker_2008}. Thus they extended the previous conjecture into an even stronger version: second order Runge--Kutta methods cannot have larger SSP radius than 2s, where is the number of stages in the method.

\section{Definitions, notations}

\begin{definition}\label{def:IVP} In this paper we consider initial value problems (IVPs) in a $\mathbb{V}$ vector space of form 
\begin{equation}\label{eq:IVP}
U^\prime(t)=f(U(t)),\ t\ge0, \qquad U(0)=u_0.
\end{equation}
We assume that $f:\mathbb{V}\to\mathbb{V}$ continuous and (\ref{eq:IVP}) has a unique solution $U:[0,\infty)\to \mathbb{V}$ for all $u_0\in\mathbb{V}$.
\end{definition} 

We consider Runge--Kutta methods for the numerical approximation of the solution of the IVP.

\begin{definition}\label{def:RKM} A Runge--Kutta method with $s$ stages in the Butcher form can be written as
\begin{align}\label{eq:RKM}
y_i&= U_{n}+\tau\sum_{j=1}^s a_{ij}f(y_j) \qquad i=1,2,\ldots,s \\
y_{s+1}&= U_{n} + \tau\sum_{j=1}^s b_{j}f(y_j)\\
U_{n+1}&=y_{s+1}
\end{align}
where $y_i$ are the stage values, $\tau$ is the timestep, $A=\left\{a_{ij}\right\}\in \mathbb{R}^{s\times s}$, $b=\left\{b_i\right\}\in\mathbb{R}^{s}$ are the matrices describing the method.
\end{definition} 

It can be easily shown (cf. \cite{butcher2003numerical}) that the Runge--Kutta method with matrices $A,b$ is at least second order accurate if and only if 
 \begin{align}\label{eq:OC1}
b^Te&=1\\
b^TAe&=\frac12,\label{eq:OC2}
\end{align}
where $e=(1,1,\ldots,1)^T \in \R^s$.   	 

Another popular form of Runge--Kutta methods is their Shu-Osher, writing the stage equations as linear combinations of Forward Euler steps.
 \begin{align}\label{eq:SO}
y_i&= \sum_{j=1}^s \alpha_{ij}y_j+ \beta_{ij}\tau f(y_j) \qquad i=1,2,\ldots,s+1 \\
U_{n+1}&=y_{s+1},
\end{align}
with $\alpha=\left\{\alpha_{ij}\right\}, \beta=\left\{\beta_{ij}\right\}\in\R^{(s+1)\times s}$.

Both the Butcher and Shu-Osher form have their advantages and disadvantages, however, for the sake of simplicity we here use only the $A,b$ matrices of the Butcher form, and express every
auxiliary matrix with them.
  	 
Let $\norm{\cdot}$ be an arbitary convex functional on $\mathbb{V}$, we are interested in a certain non-increasing property of $\norm{.}$, defined as below.

\begin{definition}\label{def:SSP} A Runge--Kutta method is called strong stability preserving (SSP), if the stage values and the approximation of the solution satisfy
\begin{align}\label{eq:SSP}
\norm{y_i}&\leq \norm{U_{n}} 	\qquad i=1,2,\ldots,s+1 
\end{align}
supposed that the right-hand side function $f$ in the IVP satisfy the Forward Euler condition with a timestep $\tau_0>0$, i.e.
\begin{align}\label{eq:FE}
\forall U\in\mathbb{V}\colon \ \norm{U+\tau_0F(U)}&\leq \norm{U}. 	
\end{align}
The timestep $\tau_0$ is called the Forward Euler timestep.
\end{definition}

The connections between the different concepts, such as monotonicity, contractivity, positivity, TVD, strong stability can be found in the references given in the Introduction section, or for a general review on SSP methods cf. \cite{gottlieb2011strong}. Here we just use the classical results of Kraaijevanger \cite{kraaij}, which later proved to be equivalent to many other definitions.  	 
  	 
\begin{theorem}{Kraaijevanger\cite{kraaij}, Ferracina and Spijker\cite{ferr_spijker_2008}.} If $r$ satisfy that
\begin{align}\label{eq:AMR1}
(I+rA) \textrm{ is invertible}&\\
(I+rA)^{-1}e&\geq0\\ 
rA(I+rA)^{-1}&\geq0\\
b^T(I+rA)^{-1} &\geq 0\\
1-rb^T(I+rA)^{-1}e &\geq 0,\label{eq:AMR5}
\end{align}
then the Runge--Kutta method is SSP with timestep $\tau\leq r\tau_0 $. The inequalities are meant to hold entry-wise. 
\end{theorem}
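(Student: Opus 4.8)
\noindent\emph{Proof idea.}\quad
The plan is to recast the method in a modified Shu--Osher form whose coefficients are assembled from the resolvent $(I+rA)^{-1}$, so that the five inequalities of the statement say exactly that these coefficients are nonnegative, and then to run a convexity argument driven by the Forward Euler condition \eqref{eq:FE}. Throughout I may assume $r>0$ (for $r=0$ the bound $\tau\le r\tau_0$ is vacuous) and that the implicit stage system \eqref{eq:RKM} has a solution $y_1,\dots,y_s$. Write $P:=(I+rA)^{-1}$ and set $\alpha_0:=Pe$, $\alpha:=I-P=rA(I+rA)^{-1}$, $\gamma^{T}:=r\,b^{T}P$, $\gamma_0:=1-r\,b^{T}Pe$. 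The five hypotheses then say precisely that $\alpha_0\ge0$, $\alpha\ge0$, $\gamma\ge0$, $\gamma_0\ge0$ (entrywise), while the identities $\alpha_0+\alpha e=Pe+(I-P)e=e$ and $\gamma_0+\gamma^{T}e=1$ hold automatically.

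The first step is the purely algebraic claim that, for a given $U_n$, the stage equations \eqref{eq:RKM} are equivalent to
\[
y_i=\alpha_{i0}U_n+\sum_{j=1}^{s}\alpha_{ij}\,z_j\quad(1\le i\le s),\qquad U_{n+1}=\gamma_0U_n+\sum_{j=1}^{s}\gamma_j\,z_j,
\]
where $z_j:=y_j+\tfrac{\tau}{r}f(y_j)$. Collecting the first block into vector form (with $Y=(y_1,\dots,y_s)^{T}$ and $F(Y)$ the stacked vector of the $f(y_j)$) it reads $(I-\alpha)Y=\alpha_0U_n+\tfrac{\tau}{r}\alpha F(Y)$; multiplying by $I+rA=(I-\alpha)^{-1}$ and using $(I+rA)Pe=e$ and $(I+rA)(I-P)=rA$ gives back $Y=eU_n+\tau AF(Y)$, and the update line is recovered the same way from $\gamma^{T}(I+rA)=r\,b^{T}$. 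Because of the two consistency identities, every right-hand side above is a convex combination of $U_n$ and the $z_j$.

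The only analytic input is this: since $0\le\tau\le r\tau_0$, setting $\lambda:=\tau/(r\tau_0)\in[0,1]$ we have $z_j=(1-\lambda)y_j+\lambda\bigl(y_j+\tau_0f(y_j)\bigr)$, so convexity of $\norm{\cdot}$ together with \eqref{eq:FE} yields $\norm{z_j}\le\norm{y_j}$. Using convexity once more on the modified Shu--Osher relations, and writing $\eta_i:=\norm{y_i}$, $\eta_0:=\norm{U_n}$, I get $\eta_i\le\alpha_{i0}\eta_0+\sum_{j}\alpha_{ij}\eta_j$ for $1\le i\le s$ and $\norm{U_{n+1}}\le\gamma_0\eta_0+\gamma^{T}\eta$; the first family reads $(I-\alpha)\eta\le(Pe)\,\eta_0$ in the entrywise partial order.

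The step I expect to be the main obstacle is upgrading $(I-\alpha)\eta\le(Pe)\eta_0$ to $\eta\le e\,\eta_0$, which requires $(I-\alpha)^{-1}=I+rA\ge0$ entrywise --- something not assumed but forced by the hypotheses, via a short spectral argument: $\alpha\ge0$ has row sums $\alpha e=e-Pe\le e$, so its maximal row sum, and hence $\rho(\alpha)$, is at most $1$; invertibility of $I-\alpha$ excludes $1$ from the spectrum, and since $\rho(\alpha)\in\mathrm{spec}(\alpha)$ by Perron--Frobenius, in fact $\rho(\alpha)<1$, so $(I-\alpha)^{-1}=\sum_{k\ge0}\alpha^{k}\ge0$. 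Multiplying the entrywise inequality by this nonnegative matrix and using $(I-\alpha)^{-1}\alpha_0=(I+rA)(I+rA)^{-1}e=e$ gives $\eta\le e\,\eta_0$, i.e.\ $\norm{y_i}\le\norm{U_n}$ for $1\le i\le s$; then $\norm{U_{n+1}}\le\gamma_0\eta_0+\gamma^{T}\eta\le(\gamma_0+\gamma^{T}e)\eta_0=\eta_0$, which is \eqref{eq:SSP}. Apart from this resolvent-positivity point, the remaining care needed is only the convexity bookkeeping and the standing solvability of the stage system.
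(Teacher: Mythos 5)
Your proof is correct. Note first that the paper itself does not prove this theorem: it is quoted from Kraaijevanger and Ferracina--Spijker, so there is no in-paper argument to compare against. What you have written is essentially the standard proof from that literature. The coefficients $\alpha_0=Pe$, $\alpha=I-P=rA(I+rA)^{-1}$, $\gamma^T=rb^TP$, $\gamma_0=1-rb^TPe$ constitute exactly the canonical Shu--Osher form with parameter $r$; the five hypotheses are precisely the nonnegativity of these coefficients; the row-sum identities $\alpha_0+\alpha e=e$ and $\gamma_0+\gamma^Te=1$ make every stage a convex combination of $U_n$ and the perturbed Forward Euler values $z_j=y_j+\tfrac{\tau}{r}f(y_j)$; and $\norm{z_j}\le\norm{y_j}$ follows from (\ref{eq:FE}) by writing $z_j$ as a convex combination of $y_j$ and $y_j+\tau_0 f(y_j)$, using $\tau\le r\tau_0$. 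Your algebraic verification that this form is equivalent to the Butcher form (via $(I+rA)Pe=e$ and $(I+rA)(I-P)=rA$) is right. The one place where reconstructions of this theorem usually stumble --- passing from the implicit entrywise inequality $(I-\alpha)\eta\le(Pe)\eta_0$ to $\eta\le e\,\eta_0$ --- you handle cleanly and correctly: $\alpha\ge0$ with row sums $e-Pe\le e$ gives $\rho(\alpha)\le1$, invertibility of $I-\alpha=(I+rA)^{-1}$ excludes the eigenvalue $1$, so $\rho(\alpha)<1$ and the Neumann series yields $(I-\alpha)^{-1}=I+rA\ge0$; this recovers, as a consequence of the hypotheses rather than as an extra assumption, the nonnegativity of $I+rA$ that Kraaijevanger establishes separately. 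The standing assumptions you flag (existence of the stage values, $r>0$, with $\tau=0$ trivial otherwise) are the usual ones and are harmless here.
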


The largest $r$ satisfying (\ref{eq:AMR1}-\ref{eq:AMR5}) is called the absolute monotonicity radius, or the SSP coefficent (or SSP radius) of the Runge--Kutta method. 	 
  	 
Thus the conjecture on the optimal second order SSP methods (i.e. Conjecture 1 in \cite{ketch_macd_gottlieb}) can be formualated as:

\begin{conjecture} If an $r>0$ satisfy equations  (\ref{eq:OC1}-\ref{eq:OC2}) and  (\ref{eq:AMR1}-\ref{eq:AMR5}) for a given (fully) implicit Runge--Kutta method (IRK), then $r\leq 2s$.
\end{conjecture}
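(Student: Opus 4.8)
The plan is to discard the Butcher arrays in favour of the absolute-monotonicity data and to recast the bound as a one-dimensional extremal question about a scalar sequence. Write $N:=I+rA$. Conditions (\ref{eq:AMR1})--(\ref{eq:AMR5}) say precisely that $P:=rAN^{-1}=I-N^{-1}$ is entrywise nonnegative with $Pe=e-N^{-1}e\le e$ (substochastic), and that $I-P=N^{-1}$ is invertible. Since a nonnegative $P$ with $Pe\le e$ has $\rho(P)\le1$, and Perron--Frobenius would force $1\in\sigma(P)$ were $\rho(P)=1$, invertibility upgrades this to $\rho(P)<1$; hence $N=(I-P)^{-1}=\sum_{k\ge0}P^k$. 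Setting $\xi^T:=b^TN^{-1}$, condition (\ref{eq:AMR1}--\ref{eq:AMR5}) also gives $\xi\ge0$. I then introduce the scalar sequence $b_m:=\xi^TP^me\ge0$ and translate each remaining hypothesis into a statement about it.

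The resulting dictionary is exact. Using $rA=N-I$ and $b^T=\xi^TN$, the order conditions (\ref{eq:OC1})--(\ref{eq:OC2}) and condition (\ref{eq:AMR5}) become
\[
\sum_{m\ge0}b_m=\xi^TNe=1,\qquad \sum_{m\ge0}m\,b_m=\tfrac1r\,\xi^T(N^2-N)e=\frac{r}{2},\qquad b_0=\xi^Te\le\frac1r,
\]
while $Pe\le e$ and $\xi^TP^m\ge0$ force the sequence to be non-increasing, $b_{m+1}=\xi^TP^m(Pe)\le\xi^TP^me=b_m$. Thus, writing $\mu:=\sum_m m\,b_m$, the relations $r=2\mu$ and $b_0\le 1/(2\mu)$ hold, and the target $r\le2s$ is \emph{equivalent} to $\mu\le s$ for every non-increasing nonnegative summable sequence of the special form $b_m=\xi^TP^me$ with $\sum_m b_m=1$ and $b_0\le 1/(2\mu)$.

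The dimension must enter through the structure $b_m=\xi^TP^me$, not through the inequalities alone: a heavy-tailed non-increasing sequence can have arbitrarily large mean, so $\mu\le s$ is false without using that $P$ is $s\times s$. Cayley--Hamilton supplies the missing constraint, namely an order-$s$ linear recurrence $b_{m+s}=\sum_{j<s}c_j\,b_{m+j}$; equivalently the generating function $B(x):=\sum_m b_m x^m=\xi^T(I-xP)^{-1}e$ is rational with denominator $\det(I-xP)$ of degree $\le s$, and $B(1)=1$, $B'(1)=\mu$. A clean lens I would exploit is probabilistic: adjoining an absorbing state makes $P$ the transient block of a Markov chain, $b_m/b_0=\Pr(\tau>m)$ for the absorption time $\tau$, so after normalising $\xi$ by $b_0$ one has $E[\tau]=1/b_0$, $E[\binom{\tau}{2}]=\mu/b_0$, and (\ref{eq:AMR5}) is equivalent to the sub-Poisson bound $\operatorname{Var}(\tau)\le E[\tau]$. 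In these terms $\mu\le s$ reduces to $E[\tau^2]\le(2s+1)E[\tau]$, for which it suffices to show that the expected absorption time of an $s$-state substochastic chain satisfying $\operatorname{Var}(\tau)\le E[\tau]$ cannot exceed $2s$. The case $s=1$ is immediate (the constraint forces the single survival probability $\le\tfrac12$, hence $E[\tau]\le2$), and the iterated implicit midpoint rule saturates every inequality above, which both certifies the constant $2s$ and isolates the conjectured unique extremiser.

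The main obstacle is exactly the step the diagonally implicit argument sidesteps. For lower-triangular $A$ the matrix $P$ is triangular, so $B(x)$ is controlled stage by stage and the recurrence can be unwound; for a fully implicit method $(I-xP)^{-1}$ is a genuine rational function whose poles (eigenvalues of $P$ located anywhere in the open unit disk) are coupled to the non-increasing-coefficient requirement, which is a positivity condition on \emph{all} powers $P^m$ at once, not merely on $P$. Bounding the second moment $B'(1)$ against this infinite family of positivity constraints under the $s$-dimensional cap is the crux. I would attack it by a compactness/extreme-point reduction of the feasible pairs $(P,\xi)$ to a canonical normal form, aiming to prove the maximiser of $\mu$ is always the nilpotent shift that yields the iterated midpoint rule, either through a majorization argument that flattens a general $P$ toward that shift without decreasing $\mu$, or by producing a dual certificate — a nonnegative combination of the constraints that directly forces $E[\tau]\le2s$. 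I expect this extremal/duality step, rather than the reduction, to carry all the weight.
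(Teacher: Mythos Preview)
The statement you are addressing is labelled a \emph{conjecture} in the paper and is not proved there; the authors establish only the diagonally implicit subcase (Theorem~\ref{thm:DIRK}). So there is no ``paper's proof'' of the full IRK statement to compare against, and your proposal should be read as an attack on an open problem.

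Your reduction is correct. The dictionary $P=I-N^{-1}$, $\xi^T=b^TN^{-1}$, $b_m=\xi^TP^me$ translates (\ref{eq:OC1})--(\ref{eq:OC2}) and (\ref{eq:AMR1})--(\ref{eq:AMR5}) exactly as you claim, and the probabilistic reading---$\tau$ the absorption time of an $s$-state substochastic chain, (\ref{eq:AMR5}) equivalent to $\operatorname{Var}(\tau)\le E[\tau]$, and $r\le E[\tau]$ so that $E[\tau]\le 2s$ suffices---is sound. The iterated midpoint rule corresponds not to a nilpotent shift but to the pure-death chain with all holding probabilities $\tfrac12$, for which $\tau$ is a sum of $s$ independent geometric$(\tfrac12)$ variables and both $E[\tau]=2s$ and $\operatorname{Var}(\tau)=E[\tau]$ hold with equality; this confirms your claim that the conjectured extremiser saturates every inequality in the chain. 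This framework is genuinely different from what the paper does for DIRK: there the authors stay with $N$ and $w=r\xi$, use Cauchy--Schwarz (Lemma~\ref{lem:wopt}) to reduce to a single row, and then induct on $s$ by peeling off the last row of the lower-triangular $M=N^{-1}$, landing on the scalar Lemma~\ref{lem:alphabeta}. Their argument is wholly dependent on triangularity for the inductive block decomposition; yours is not, which is exactly why it could in principle reach the full conjecture.

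That said, your proposal is not a proof. The decisive step---that every $s$-state substochastic chain with $\operatorname{Var}(\tau)\le E[\tau]$ satisfies $E[\tau]\le 2s$---is left as a program (flatten $P$ toward the extremal chain by majorization, or exhibit a dual certificate), and you rightly flag it as carrying all the weight. Cayley--Hamilton and the rationality of $B(x)$ give a degree bound but do not by themselves control $B'(1)$ against the infinite family of monotonicity constraints $b_m\ge b_{m+1}$, and without triangularity there is no evident inductive peel. So the gap you name is precisely the gap that keeps the IRK statement a conjecture; what you have is a clean and suggestive reformulation, not a resolution.
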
  	

This conjecture was stated for singly-diagonally implicit RK (SDIRK) methods in \cite{ferr_spijker_2008} and for IRK methods in \cite{ketch_macd_gottlieb}. 
In this paper we prove this conjecture for diagonally implicit RK (DRIK) methods.

\begin{theorem} \label{thm:DIRK} If an $r>0$ satisfy equations  (\ref{eq:OC1}-\ref{eq:OC2}) and  (\ref{eq:AMR1}-\ref{eq:AMR5}) for a given diagonally implicit Runge--Kutta method (IRK), i.e. $A$ is a lower triangular matrix, then $r\leq 2s$.
\end{theorem}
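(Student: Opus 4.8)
The plan is to reduce the statement, through the linear stability function, to a sharp estimate for real‑rooted rational functions that are absolutely monotone on an interval; the lower‑triangular structure of $A$ is precisely what makes this reduction effective, and that is why the result is stated for diagonally implicit methods.

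\emph{Step 1 (reductions).} Fix $r>0$ satisfying (\ref{eq:OC1})--(\ref{eq:OC2}) and (\ref{eq:AMR1})--(\ref{eq:AMR5}). Since $rA(I+rA)^{-1}=I-(I+rA)^{-1}$, combining the second and third conditions of (\ref{eq:AMR1})--(\ref{eq:AMR5}) applied to $e$ gives $0\le\eta\le e$ componentwise, where $\eta:=(I+rA)^{-1}e$; put also $\xi^{T}:=b^{T}(I+rA)^{-1}\ge 0$, so that (\ref{eq:AMR5}) reads $r\,b^{T}\eta=r\,\xi^{T}e\le 1$. As $A$ is lower triangular, so is $(I+rA)^{-1}$, with diagonal entries $1/(1+ra_{ii})$; reading off the diagonal of $rA(I+rA)^{-1}\ge 0$ forces $a_{ii}\ge 0$ and $1+ra_{ii}>0$ for every $i$. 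Finally, (\ref{eq:OC1})--(\ref{eq:OC2}) say exactly that the stability function $\psi(z):=1+z\,b^{T}(I-zA)^{-1}e$ has the expansion $\psi(z)=1+z+\tfrac12 z^{2}+O(z^{3})$.

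\emph{Step 2 (pass to $\psi$).} It is classical that conditions (\ref{eq:AMR1})--(\ref{eq:AMR5}) force $\psi$ to be absolutely monotone on $(-r,0]$, so $r$ does not exceed the radius of absolute monotonicity of $\psi$; hence it suffices to bound that radius by $2s$. Here the diagonally implicit structure is decisive: since $A$ is lower triangular, $\det(I-zA)=\prod_{i=1}^{s}(1-a_{ii}z)$, so $\psi=P/Q$ with $Q(z)=\prod_{i=1}^{s}(1-a_{ii}z)$ — a polynomial all of whose roots $1/a_{ii}$ are real and nonnegative (factors with $a_{ii}=0$ are simply dropped, lowering $\deg Q$) — and with $\deg P\le s$, $\deg Q\le s$. (One may instead argue directly with the vector of internal stability functions $(I-zA)^{-1}e$, which the conditions also force to be absolutely monotone, but the scalar reduction is cleaner.)

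\emph{Step 3 (the crux).} I would then prove: if $\psi=P/Q$ with $Q$ a product of at most $s$ factors $1-a_{ii}z$, $a_{ii}\ge 0$, $\deg P\le s$, $\psi(z)=1+z+\tfrac12 z^{2}+O(z^{3})$, and $\psi$ is absolutely monotone on $(-\rho,0]$, then $\rho\le 2s$. Since all poles of $\psi$ are positive, $\psi$ is real‑analytic on $(-\infty,0]$; the order‑two normalization rules out $\rho=\infty$, and maximality of $\rho$ then forces some derivative $\psi^{(k)}$ to vanish at $-\rho$. Writing the partial‑fraction / factored form $\psi(z)=C\,\prod_{j}(z+\zeta_{j})\big/\prod_{i}(1-a_{ii}z)$ and exploiting $\psi,\psi',\psi'',\dots\ge 0$ on $(-\rho,0]$, one shows that every real zero $-\zeta_{j}$ of $P$ satisfies $\zeta_{j}\ge\rho$, that nonreal zeros of $P$ come in conjugate pairs lying outside the disk of radius $\rho$, and then one balances the $s$ pole positions against the two coefficient constraints from (\ref{eq:OC1})--(\ref{eq:OC2}) to obtain $\rho\le 2s$. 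I expect the main obstacle to be exactly this last balancing — turning the qualitative information (a derivative vanishes at $-\rho$; zeros lie far out; at most $s$ poles, all $\ge 0$) into the sharp quantitative inequality. I anticipate doing this either through a log‑convexity estimate on $\psi$ over $(-\rho,0]$ that ties $\psi'(0)/\psi(0)=1$, the values $a_{ii}$, and $\rho$ together, or through an extremal/deformation argument showing that moving all poles to a common value and merging the zeros of $P$ can only increase $\rho$, so that the optimum is attained only at $a_{ii}\equiv 1/(2s)$ and $P(z)=C(z+2s)^{s}$, i.e.\ $\psi(z)=\bigl((2s+z)/(2s-z)\bigr)^{s}$, whose radius is exactly $2s$. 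The real‑rootedness of $Q$ — absent for a general implicit method — is what keeps this analysis finite and is the point at which diagonal implicitness is genuinely used.

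\emph{Step 4 (equality and uniqueness).} Tracing the equality case $\psi(z)=\bigl((2s+z)/(2s-z)\bigr)^{s}$ back through Steps 2 and 1 — imposing in addition that the internal stage functions $(I-zA)^{-1}e$ be absolutely monotone on $(-2s,0]$ — pins down $a_{ii}=1/(2s)$ for all $i$, the strictly lower‑triangular part of $A$, and the vector $b$, identifying the unique extremal method as the $s$‑fold iterated implicit midpoint rule and thereby yielding the uniqueness statement announced in the abstract.
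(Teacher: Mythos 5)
Your Step 3 is the entire theorem, and it is not proved: you write ``I would then prove\dots'', ``I expect the main obstacle to be exactly this last balancing'', and ``I anticipate doing this either through a log-convexity estimate \dots or through an extremal/deformation argument''. The assertion that a rational function $\psi=P/Q$ of order two, with $\deg P\le s$ and $Q$ a product of at most $s$ real factors $1-a_{ii}z$ with $a_{ii}\ge0$, which is absolutely monotone on $(-\rho,0]$, must satisfy $\rho\le 2s$ is a sharp extremal statement about threshold factors of rational approximations. It does not follow from the qualitative facts you list (a derivative of $\psi$ vanishes at $-\rho$; the zeros of $P$ lie outside the disk of radius $\rho$); the ``balancing'' of the $s$ pole locations against the two order conditions is precisely where all the difficulty lives, and neither of the two strategies you name is carried out even in outline for $s=2$. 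Note also that if the analogous scalar bound held without the real-pole restriction, the full IRK conjecture would follow immediately from $R(A,b)\le R(\psi)$, whereas that conjecture was open; so either the scalar bound genuinely fails for complex poles (meaning your argument must use real-rootedness in an essential, quantitative way that you have not exhibited), or the scalar statement itself is unestablished territory. A secondary, smaller issue: even granting the extremal $\psi$, uniqueness of the stability function does not by itself pin down $(A,b)$, as you partly acknowledge in Step 4, so the uniqueness claim would need the additional internal-stage analysis to actually be performed.

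For comparison, the paper avoids the stability function entirely. It sets $N=I+rA$, $w^T=rb^TN^{-1}$, rewrites the order conditions as $w^TNe=r$ and $w^TN^2e=r^2/2+r$, and reduces the theorem to the bound $(w^TNe)^2/(w^TN^2e)\le 2s/(s+1)$ subject to the sign constraints. A Cauchy--Schwarz argument shows the optimal $w$ is a coordinate vector, and then induction on $s$, using the block-triangular form of $M=N^{-1}$ and an elementary one-variable calculus lemma (maximize $(1+\alpha)^2/(1+\alpha+\beta)$ given $\alpha^2/\beta\le 2(s-1)/s$), closes the argument and yields uniqueness. If you want to salvage your route, the honest task is to prove your Step 3 claim outright; as written, the proposal is a plan rather than a proof.
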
  	
\begin{remark}
The case when $r=2s$ can be achieved is unique, given by the following matrices

\begin{align}
  A_\mathrm{opt}={}&
  \left(
  \begin{array}{ccccc}
    \frac{1}{2s}&0&0&\cdots&0\\
    \frac{1}{s}&\frac{1}{2s}&0&\cdots&0\\
    \frac{1}{s}&\frac{1}{s}&\frac{1}{2s}&\cdots&0\\
    \vdots&&&\ddots&\\
    \frac{1}{s}&\frac{1}{s}&\cdots&\frac{1}{s}&\frac{1}{2s}
  \end{array}
  \right)\\
  b^T_\mathrm{opt}={}&
  \left(
  \begin{array}{ccccc}
      \frac{1}{s}&\frac{1}{s}&\cdots&\frac{1}{s}&\frac{1}{s}
  \end{array}
  \right).
\end{align}
The corresponding Runge--Kutta method is the iterated implicit midpoint rule, as it has been correctly conjectured previously.
\end{remark}

\begin{proof}(Theorem \ref{thm:DIRK})
Using the Butcher form, the order conditions are simple, while the absolute monotonicty inequalities contain more complicated expression. The Shu-Osher form would provide very simple absolute monotonocity inequlities, however, the order conditions would become complicated. As an intermediate solution we use our own notations to gain easily manageable inequalities.

Let us denote $N=I+rA,\ M=(I+rA)^{-1},\ w^T=rb^TN^{-1}$.
\begin{lemma}\label{lem:Nw} Theorem \ref{thm:DIRK} can be written into the following equivalent form.
For arbitrary matrices $N\in\R^{s\times}, w\in\R^{s}$, if $N$ is lower triangular and 
\begin{align}
  N &\textrm{ is invertible}\\\label{eq:OC_1}
  w^TNe &=r\\ \label{eq:OC_2}
  w^TN^2e&=\frac{r^2}{2}+r\\  
  N^{-1}e &\geq 0\\
  I-N^{-1} &\geq 0\\
  w^T &\geq 0\\
  1-w^Te &\geq 0\\  
\end{align}
hold, then $r\leq 2s$.
\end{lemma}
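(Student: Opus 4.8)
The plan is to reinterpret the algebraic hypotheses probabilistically. First I would set $M=N^{-1}$ and $P=I-M$ and check that $P$ is a substochastic lower-triangular matrix with spectral radius $<1$: the inequality $I-N^{-1}\ge 0$ gives $P\ge 0$ entrywise, $N^{-1}e\ge 0$ gives $Pe\le e$, and a short argument (the lower-triangular invertible matrix $M$ has nonpositive strictly-lower entries and nonnegative row sums, hence $M_{ii}\in(0,1]$) gives $P_{ii}\in[0,1)$; consequently $N=(I-P)^{-1}=\sum_{k\ge0}P^k$. Since also $w\ge 0$ and $w^Te\le 1$, I can define a Markov chain $X_0,X_1,\dots$ on $\{1,\dots,s\}\cup\{\dagger\}$: start from the subprobability vector $w$ (and from the absorbing state $\dagger$ with the leftover mass $1-w^Te$), move from $i$ to $j$ with probability $P_{ij}$ and to $\dagger$ with probability $1-(Pe)_i$. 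Let $T$ be the absorption time.

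Next I would turn the two ``order'' identities into the first two moments of $T$. Because $\Pr(X_k\ne\dagger)=w^TP^ke$ and absorption is monotone in $k$, summation gives $E[T]=w^T\bigl(\sum_kP^k\bigr)e=w^TNe=r$, and the analogous double sum gives $E[T^2]=\sum_k(2k+1)\,w^TP^ke=2\,w^TN^2e-w^TNe=r^2+r$, so $\operatorname{Var}(T)=r$. Since $P$ is lower triangular, the distinct states visited form a strictly decreasing sequence, so each transient state is visited in at most one contiguous block; writing $V$ for the (random) set of visited transient states and $H_i$ for the sojourn time in $i\in V$, the strong Markov property together with the i.i.d.\ ``stay or leave'' structure at a fixed state shows that, conditionally on the sequence of distinct visited states, the $H_i$ are independent with $H_i\sim\mathrm{Geom}(1-P_{ii})$. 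With $q_i:=N_{ii}=(1-P_{ii})^{-1}$ this yields $E[T\mid V]=\sum_{i\in V}q_i$ and $\operatorname{Var}(T\mid V)=\sum_{i\in V}(q_i^2-q_i)$.

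Finally I would combine these three facts. Put $\pi_i:=\Pr(i\in V)$. The law of total variance gives $r=\operatorname{Var}(T)\ge E\bigl[\operatorname{Var}(T\mid V)\bigr]=\sum_i(q_i^2-q_i)\pi_i$, while $r=E[T]=\sum_i q_i\pi_i$; adding these, $\sum_i q_i^2\pi_i\le 2r$. Since $\sum_i\pi_i=E\lvert V\rvert\le s$, Cauchy--Schwarz gives $r^2=\bigl(\sum_i q_i\pi_i\bigr)^2\le\bigl(\sum_i q_i^2\pi_i\bigr)\bigl(\sum_i\pi_i\bigr)\le 2rs$, hence $r\le 2s$. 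Chasing the equality cases ($\sum\pi_i=s$ forces every state to be visited almost surely, equality in Cauchy--Schwarz forces all $q_i$ equal on the support, and then $\sum q_i^2\pi_i=2r$ forces $q_i\equiv 2$, i.e.\ $P_{ii}=\tfrac12$ and hence $P_{i,i-1}=\tfrac12$) pins down $N$ and $w$ uniquely, and unwinding the substitution recovers $A_{\mathrm{opt}}$ and $b_{\mathrm{opt}}$.

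I expect the main obstacle to be setting up this dictionary rigorously: verifying from the monotonicity inequalities alone that $P$ is substochastic with $\rho(P)<1$, and --- the more delicate point --- justifying that the sojourn times are conditionally independent geometric variables, which is exactly what makes the conditional-variance computation valid. Everything after that is the three-line estimate above. One could instead phrase the proof purely in linear algebra by taking $q_i=N_{ii}$ and $\pi_i=(N^Tw)_i/N_{ii}$, but the key inequality $\sum_i N_{ii}(N^Tw)_i\le 2\,w^TNe$ is precisely the assertion $\operatorname{Var}\bigl(E[T\mid V]\bigr)\ge 0$ and appears to require essentially the same argument.
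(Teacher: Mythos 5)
Your argument is correct, but be aware of what it proves relative to what the paper proves here: the paper's own proof of Lemma~\ref{lem:Nw} is nothing more than the change of variables $N=I+rA$, $w^T=rb^TN^{-1}$ showing that the hypotheses on $(N,w)$ are equivalent to (\ref{eq:OC1}--\ref{eq:OC2}) and (\ref{eq:AMR1}--\ref{eq:AMR5}); the implication ``hypotheses $\Rightarrow r\le 2s$'' is then the content of the rest of the paper. You have skipped that (easy) equivalence and instead given a self-contained proof of the implication itself, by a genuinely different route. The paper reduces to bounding $(w^TNe)^2/(w^TN^2e)$ by $2s/(s+1)$, uses Cauchy--Schwarz (Lemma~\ref{lem:wopt}) to replace $w$ by a coordinate vector, and then inducts on $s$ via a partitioned-matrix computation and the one-variable optimization of Lemma~\ref{lem:alphabeta}. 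You read the hypotheses as saying that $P=I-N^{-1}$ is a lower-triangular substochastic matrix and $w$ a subprobability vector, so that the absorption time $T$ satisfies $E[T]=w^TNe=r$ and $\operatorname{Var}(T)=2w^TN^2e-w^TNe-(w^TNe)^2=r$; triangularity makes the jump chain strictly decreasing, the holding times conditionally independent geometrics, and the law of total variance plus Cauchy--Schwarz against $E\lvert V\rvert\le s$ gives $r^2\le 2rs$ in one stroke, with the equality analysis correctly recovering $N_\mathrm{opt}$ and $w_\mathrm{opt}$. I checked the individual steps and they hold: $M_{ii}>0$ follows from invertibility together with the sign conditions, $\rho(P)<1$ from triangularity, the two moment identities are the standard tail-sum formulas, and the geometric holding-time/jump-chain decomposition is the usual one for discrete chains with self-loops, so the conditional-variance computation you flag as delicate is indeed valid. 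What your approach buys is a non-inductive, conceptually transparent proof that treats all $s$ at once and explains why the constant is $2s$ ($s$ states, each contributing mean at most $2$ at the variance-optimal balance); what it costs is the probabilistic scaffolding, which, as you note, can be compressed into the purely algebraic inequality $\sum_i N_{ii}(N^Tw)_i\le 2\,w^TNe$. If you write this up, do add the one-line verification that the $(N,w)$ hypotheses are equivalent to the original order and absolute-monotonicity conditions, since that equivalence is the literal assertion of the lemma as stated.
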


\begin{proof}(Lemma \ref{lem:Nw})
Simple substitutions show that $w^TNe=rb^Te$ and $w^TN^2e=rb^T(I+rA)e=rb^Te + r^2b^TAe$, thus (\ref{eq:OC1}--\ref{eq:OC2}) are equivalent to (\ref{eq:OC_1}--\ref{eq:OC_2}), provided that $N=I+rA$ is invertible. The inequalities are just simple reformulations of the absolute monotonicity inequalities, the only non-trivial one is:
\begin{align}
  I-N^{-1}=(I+rA)(I+rA)^{-1}-(I+rA)^{-1} = rA(I+rA)^{-1} 
\end{align} 
\end{proof}

From now on we work with matrices $N,M,w$. It is worth mentioning that using these new notations, the previously given $A_{opt}, b_{opt}$ matrices give the following $N_{opt}$ and $w_{opt}$.

\begin{align}
  N_\mathrm{opt}&=
  \left(
  \begin{array}{ccccc}
    2&0&0&\cdots&0\\
    2&2&0&\cdots&0\\
    2&2&2&\cdots&0\\
    \vdots&&&\ddots&\\
    2&2&\cdots&2&2
  \end{array}
  \right)\\
  w^T_\mathrm{opt}&={}
  \left(
  \begin{array}{ccccc}
      0&0&\cdots&0&1
  \end{array}
  \right)
\end{align}

We can reduce the number of the constraints by observing that
\begin{align}
\frac{(w^TNe)^2}{w^TN^2e}=\frac{r^2}{\frac{r^2}{2}+r}=\frac{1}{\frac{1}{2}+\frac{1}{r}},
\end{align}

and $r\leq 2s$ is equivalent to $\frac{1}{\frac{1}{2}+\frac{1}{r}}\leq \frac{1}{\frac{1}{2}+\frac{1}{2s}}=\frac{2s}{s+1}$,
thus it is sufficient to prove that
\begin{align}
  \frac{(w^TNe)^2}{w^TN^2e}&\leq\frac{2s}{s+1} \textrm{  subject to}\\
  w^T &\geq 0\\
  1-w^Te &\geq 0\\  
  N^{-1}e &\geq 0\\
  I-N^{-1} &\geq 0.\\
\end{align}

For any $w\geq0$, $w\neq 0$ with $w^Te<1$, one can define $w^*=\frac{w}{w^Te}$ to obtain $\frac{(w^TNe)^2}{w^TN^2e}\leq\frac{({w^*}^TNe)^2}{{w^*}^TN^2e}$ and ${w^*}^Te=1$. Thus the condition $w^Te\leq1$ can be made stronger, we can require $w^Te=1$.

Using the notation $M=N^{-1}$, we obtain that $0\leq I-N^{-1}=I-M$, therefore $M_{ij}\leq0$ for all $i\neq j$ and it is sufficient to prove that the optimum of the following problem is at most $\frac{2s}{s+1}$. (Although here we ignored the upper bound constraints for $M_{ii}$, we will see that optimum will not increase.)
\begin{align}
  \frac{(w^TM^{-1}e)^2}{w^TM^{-2}e} &\leq \frac{2s}{s+1}\ \ \textrm{  subject to}\\
  w^T &\geq 0\\
  w^Te &=1\\  
  Me &\geq 0\\
  M_{ij}&\leq 0 \ (i\neq j).
\end{align}

We can see that most of the components of the optimal $w_{opt}$ are 0. The following lemma explains this observation in general.

\begin{lemma}\label{lem:wopt}
For any fixed matrix $N\geq0$: $$\frac{(w^TNe)^2}{w^TN^2e}\leq \max_i\left\{\frac{((Ne)_i)^2}{(N^2e)_i}\right\}$$
and equality can hold only in the case when $w$ has $s-1$ zero components and one component equal to $1$.  
\end{lemma}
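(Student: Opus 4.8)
The plan is to recognize $\frac{(w^TNe)^2}{w^TN^2e}$ as (the value of) a convex function of $w$ on the probability simplex, whose maximum over a polytope is then attained at a vertex. Since both the asserted inequality and its two sides are invariant under replacing $w$ by a positive multiple, I would first reduce to the case $w\ge 0$, $w\ne 0$, $w^Te=1$; abbreviating $p:=Ne$ and $q:=N^2e$, the hypothesis $N\ge 0$ gives $p,q\ge 0$, and in the case of interest ($N$ invertible, as in Theorem~\ref{thm:DIRK}) in fact $q>0$ entrywise, so $w^Tq>0$ and every $\frac{p_i^2}{q_i}$ is well defined.

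The inequality itself is then a one-line Jensen (equivalently Cauchy--Schwarz) estimate. Setting $\lambda_i:=\frac{w_iq_i}{w^Tq}\ge0$, which sum to $1$, one computes
\begin{align}
\frac{(w^Tp)^2}{w^Tq}
=\Big(w^Tq\Big)\Big(\sum_i\lambda_i\tfrac{p_i}{q_i}\Big)^{2}
\le\Big(w^Tq\Big)\sum_i\lambda_i\tfrac{p_i^2}{q_i^2}
=\sum_i w_i\tfrac{p_i^2}{q_i}
\le\Big(\max_i\tfrac{p_i^2}{q_i}\Big)w^Te
=\max_i\frac{((Ne)_i)^2}{(N^2e)_i},
\end{align}
the middle inequality being Jensen for $t\mapsto t^2$. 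Conceptually this is just the convexity of the perspective function $(x,y)\mapsto x^2/y$ on $\R\times(0,\infty)$ composed with the affine map $w\mapsto(w^Tp,w^Tq)$; I would mention that viewpoint, since it also explains why the optimum sits at a vertex.

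For the equality discussion I would read the chain backwards. Equality in the last step forces $\frac{((Ne)_i)^2}{(N^2e)_i}=\max_j\frac{((Ne)_j)^2}{(N^2e)_j}$ for every $i$ in $\mathrm{supp}(w)$, while equality in Jensen forces $\frac{(Ne)_i}{(N^2e)_i}$ to be constant on $\mathrm{supp}(w)$; combining the two shows that both $(Ne)_i$ and $(N^2e)_i$ are constant on $\mathrm{supp}(w)$. Hence, whenever the maximum of $i\mapsto\frac{((Ne)_i)^2}{(N^2e)_i}$ is realized by a single index $k$, the support of $w$ is $\{k\}$, and $w^Te=1$ yields $w=e_k$, i.e.\ $s-1$ zeros and one entry equal to $1$.

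The one delicate point — and the place I would be most careful — is exactly this last step: for a completely general $N\ge0$ the maximum can also be attained at non-vertices (for instance $N=2I$ admits $w=(\tfrac12,\tfrac12)$ with equality), so the word ``only'' should be understood as ``among the maximizers there is always such a vertex $w$'', or else one invokes the harmless nondegeneracy that no two coordinates realize the maximum with the same pair $\big((Ne)_i,(N^2e)_i\big)$. For the target matrix $N_\mathrm{opt}$ one checks $\frac{((Ne)_i)^2}{(N^2e)_i}=\frac{2i}{i+1}$, which is strictly increasing in $i$, so the maximizer is genuinely unique and the conclusion holds verbatim there.
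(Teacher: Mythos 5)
Your argument is correct and is essentially the paper's: the weighted Jensen step for $t\mapsto t^2$ with weights $\lambda_i=w_iq_i/(w^Tq)$ is precisely the Cauchy--Schwarz inequality the paper applies to the vectors with entries $w_i(Ne)_i/\sqrt{w_i(N^2e)_i}$ and $\sqrt{w_i(N^2e)_i}$, followed by the same bound $\sum_i w_i p_i^2/q_i\le\max_i p_i^2/q_i$ using $w^Te=1$. Your caveat on the equality case is a genuine improvement in precision: the paper settles it with ``obviously,'' whereas, as your $N=2I$ example shows, the literal ``only'' requires either the reading ``some maximizer is a vertex'' or a nondegeneracy assumption on the ratios $((Ne)_i)^2/(N^2e)_i$. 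In the application this is harmless for exactly the reason you indicate: inside the induction the indices $j<s$ yield ratios at most $2j/(j+1)$, strictly below $2s/(s+1)$, so equality does force the support of $w$ to be the single index $s$.
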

\begin{proof}
Apply the Cauchy-Schwarz inequality for the vectors 
$$x=\left( \frac{w_1(Ne)_1}{\sqrt{w_1(N^2e)_1}}, \frac{w_2(Ne)_2}{\sqrt{w_2(N^2e)_2}},\ldots,\frac{w_s(Ne)_s}{\sqrt{w_s(N^2e)_s}}
\right),$$
$$
y=\left(\sqrt{w_1(N^2e)_1}, \sqrt{w_2(N^2e)_2},\ldots,\sqrt{w_s(N^2e)_s}
\right).
$$
$$\frac{(w^TNe)^2}{w^TN^2e}=\frac{(x^Ty)^2}{\norm{y}^2}\leq \norm{x}^2=\sum_i w_i\frac{((Ne)_i)^2}{(N^2e)_i}\leq \max_i\left\{\frac{((Ne)_i)^2}{(N^2e)_i}\right\}$$

Equality can obviously hold only in case when $w$ has $s-1$ zero components and one component equal to $1$.
\end{proof}

We have not used so far that $M$ originates from a matrix of a diagonally implicit Runge-Kutta method, thus $M$ is lower triangular, i.e. $M_{ij}=0$ for $i<j$.
$Me\geq 0$ means that the row sums of $M$ are nonnegative, $M_{ij}\leq0$ for $i\neq j$,  thus $M_{ii}\geq0$ must hold. Since $M$ is invertible $M_{ii}>0$ holds, too.
The constraints and objective are positively homogeneous, i.e. invariant under the transformation $M:= cM$, for any $c>0$, so we can assume that $M_{ii}=1$.

First we analyze the $s=2$ case. Although it was proven by many times in the literature, we provide a simple proof here to illustrate the advantages of our formalization.

\begin{lemma}\label{lem:dirk_s2}
The optimal second order two-stage DIRK method has $r=4$ with the coefficients

$$M=
\left(
\begin{array}{cc}
  1&0\\
  -1&1
\end{array}\right),\ w^T=(0,1).$$ 

\end{lemma}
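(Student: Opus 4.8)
The plan is to bring Lemma~\ref{lem:dirk_s2} down to a single short inequality. Following the reductions above, scale so that $M_{11}=1$; since $M$ is $2\times2$ lower triangular with $M_{12}=0$, off-diagonal entries $\le 0$, and (dropping, as in the text, the upper bounds on the diagonal) positive diagonal, write $M_{22}=d>0$ and $M_{21}=-t$, where the second row of $Me\ge 0$ forces $0\le t\le d$. Then $N=M^{-1}=\left(\begin{smallmatrix}1&0\\ t/d & 1/d\end{smallmatrix}\right)$, hence $Ne=(1,\ (t+1)/d)^T$ and $N^{2}e=(1,\ (td+t+1)/d^{2})^T$, and Lemma~\ref{lem:wopt} bounds the objective by $\max\bigl\{1,\ (t+1)^{2}/(td+t+1)\bigr\}$. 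So it suffices to prove $(t+1)^{2}/(td+t+1)\le \tfrac43$ whenever $0\le t\le d$.

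This is elementary: clearing denominators, the claim is $(3t-1)(t+1)\le 4td$, and since $0\le t\le d$ we have $4td\ge 4t^{2}\ge 3t^{2}+2t-1=(3t-1)(t+1)$, the middle inequality being just $(t-1)^{2}\ge 0$. Equality throughout forces $(t-1)^{2}=0$ and $t(d-t)=0$, i.e.\ $t=d=1$, so $M=\left(\begin{smallmatrix}1&0\\ -1&1\end{smallmatrix}\right)$ (in particular the dropped constraint $M_{22}\le 1$ is met, so it was indeed inactive); and because the maximum $\tfrac43$ is attained only in the second coordinate, while the first gives $(Ne)_1^{2}/(N^{2}e)_1=1$, the equality clause of Lemma~\ref{lem:wopt} forces $w=(0,1)^T$. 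Since $2s/(s+1)=\tfrac43$ for $s=2$, and the chain of reductions turns $\tfrac{(w^{T}Ne)^{2}}{w^{T}N^{2}e}\le\tfrac43$ into $r\le 4$, this shows the optimum is $r=4$ with the stated $M$ and $w$.

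Finally I would translate back to Butcher form. Undoing the remaining scalar homogeneity, the genuine $N=I+rA$ equals $c^{-1}\left(\begin{smallmatrix}1&0\\ 1&1\end{smallmatrix}\right)$ for some $c\in(0,1]$; imposing the order condition $w^{T}Ne=r=4$ from~(\ref{eq:OC_1}) gives $2/c=4$, so $c=\tfrac12$, $N=\left(\begin{smallmatrix}2&0\\ 2&2\end{smallmatrix}\right)$, and $A=(N-I)/r$, $b^{T}=w^{T}N/r$ produce exactly $A_{\mathrm{opt}},b_{\mathrm{opt}}$---the two-stage iterated implicit midpoint rule. There is no genuine obstacle here: it is pure $2\times2$ algebra, and its role is to be a faithful miniature of the $s$-stage proof. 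The only point needing attention is the bookkeeping of the homogeneity rescalings ($w\mapsto w/(w^{T}e)$ and the overall scaling of $M$) so that the normalized $M,w$ in the statement, the genuine Butcher matrices, and the value of $r$ all match up---which is precisely what the general argument must do in $s$ dimensions.
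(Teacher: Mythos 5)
Your proof is correct and takes essentially the same route as the paper: parametrize the normalized $2\times 2$ lower triangular $M$, use Lemma~\ref{lem:wopt} to reduce to the second-row ratio, and verify a quadratic inequality whose equality case pins down $M$ and $w$. The only differences are cosmetic (you normalize $M_{11}=1$ where the paper fixes $M_{22}=1$, so your constraint $t\le d$ plays the role of the paper's $c\le 1$), and you are somewhat more explicit than the paper about the dropped diagonal bound and the translation back to Butcher form.
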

\begin{proof}
$M=
\left(
\begin{array}{cc}
  a&0\\
  -c&1
\end{array}
\right)$ with $a>0,c\geq 0$, $-c+1\geq0$ and $0\leq w^T=(w_1,w_2)$, $w_1+w_2=1$. 
$$N=M^{-1}=
\left(
\begin{array}{cc}
  \frac1{a}&0\\
  \frac{c}{a}&1
\end{array}
\right), \ \ N^2=M^{-2}=
\left(
\begin{array}{cc}
  \frac1{a^2}&0\\
  \frac{c}{a}+\frac{c}{a^2}&1
\end{array}
\right)$$

$$\frac{(w^TNe)^2}{w^TN^2e}=\frac{\left(\frac{c}{a}+1\right)^2}{\left(\frac{c}{a}+\frac{c}{a^2}+1\right)}
\leq\frac43,$$
because
$$3\left(1+\frac{c}{a}\right)^2-4\left(1+\frac{c}{a^2}+\frac{c^2}{a^2}\right)=-\left(1-\frac{c}{a}\right)^2+\frac{4c^2-4c}{a^2}\leq0.$$
Equality can only hold in the case when $c=1$, $a=1$ and $w^T=(0,1)$. 
\end{proof}

Now we prove the general case using mathematical induction. We have proved our theorem for $s=2$ and it is trivial for $s=1$.
Assume that $\frac{(w^TM^{-1}e)^2}{w^TM^{-2}e}\leq\frac{2k}{k+1}$ holds for any $w\in\mathbb{R}^{k}$, $M\in\mathbb{R}^{k\times k}$ with $0\leq w$, $w^Te=1$, $M_{ij}\leq0$ $\ (i\neq j)$, $Me\geq 0$ for all $k=1,2,\ldots, s-1$.

As a consequence of the Lemma, we can now assume that there exists an index $j$: $w_j=1$, $w_i=0$, $(i\neq j)$.
Thus $w^TNe$ is the sum of the $j$-th row of $N=M^{-1}$, $w^TN^2e$ is the sum of the $j$-th row of $N^2$.
Since $M$ is a lower triangular matrix, the $j$-th rows of $M^{-1}$ and $M^{-2}$ only depend on the upper left $j \times j$ minor of $M$. Therefore if $j<s$, we can apply the assumption of the mathematical induction: 

$$\frac{(w^TNe)^2}{w^TN^2e}\leq \frac{2j}{j+1}<\frac{2s}{s+1}.$$

The only remaining case is $j=s$, thus we need to compute the sum of the last rows of $N$ and $N^2$. 

Now let us consider a matrix $M_s\in\mathbb{R}^{s\times s}$ in the partitioned form: $M_s=\left(
\begin{array}{cc}
  M_{s-1}&0\\
  -a^T&1
\end{array}\right)$ and a vector $w\in\mathbb{R}^{s}$ in the form $w_s^T=(w_{s-1}^T, w)$. We suppose that $M_s, w$ do not violate the constraints of the optimization problem.

The inverse of the partitioned matrix can be explicitly calculated as 
$$M_{s}^{-1}= \frac{1}{\det(M_s)}\left(
\begin{array}{cc}
  M_{s-1}^{-1}&0\\
  a^TM_{s-1}^{-1}&1
\end{array}\right), \ 
M_s^{-2}=\frac{1}{\det(M_s)^2}
\left(
\begin{array}{cc}
  M_{s-1}^{-2}&0\\
  a^T(M_{s-1}^{-1}+M_{s-1}^{-2})&1
\end{array} \right).
$$
Thus the expression we have to maximize $(w^T=(0,0,\ldots,1))$

$$\frac{(w^TNe)^2}{w^TN^2e}=\frac{(1+a^TM_{s-1}^{-1}e)^2}{1+a^TM_{s-1}^{-1}e+a^TM_{s-1}^{-2}e}$$
Applying the assumption of the induction for $a\in\mathbb{R}^{s-1}$, $M_{s-1}\in\mathbb{R}^{(s-1)\times (s-1)}$:

$$\frac{(a^TM_{s-1}^{-1}e)^2}{a^TM_{s-1}^{-2}e}\leq\frac{2(s-1)}{s}$$

Denote $0\leq\alpha=a^TM_{s-1}^{-1}e$, $0\leq\beta=a^TM_{s-1}^{-2}e$. It is sufficient to prove the following lemma.

\begin{lemma}\label{lem:alphabeta}
\begin{align*}
 \frac{(1+\alpha)^2}{1+\alpha+\beta}&\leq\frac{2s}{s+1},  \textrm{ whenever} \\
\frac{\alpha^2}{\beta}&\leq \frac{2(s-1)}{s}
\end{align*}
\end{lemma}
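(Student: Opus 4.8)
(Proposal for Lemma~\ref{lem:alphabeta})
The plan is to first eliminate $\beta$. For fixed $\alpha\ge 0$ the left-hand side is a strictly decreasing function of $\beta\ge 0$, since the numerator $(1+\alpha)^2\ge 1$ does not depend on $\beta$ and the denominator $1+\alpha+\beta$ is positive and increasing. Hence the worst case occurs when $\beta$ is as small as the hypothesis allows. If $\alpha=0$ there is nothing to prove, because then $\frac{(1+\alpha)^2}{1+\alpha+\beta}=\frac{1}{1+\beta}\le 1\le\frac{2s}{s+1}$; so we may assume $\alpha>0$, which forces $\beta>0$ and $\beta\ge\frac{s\alpha^2}{2(s-1)}$ (here $s\ge 2$; the case $s=1$ is immediate, as the hypothesis then forces $\alpha=0$). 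Substituting this lower bound for $\beta$, it suffices to prove, for every $\alpha\ge 0$,
$$\frac{(1+\alpha)^2}{1+\alpha+\frac{s\alpha^2}{2(s-1)}}\le\frac{2s}{s+1}.$$

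Both denominators being positive, I would clear them and collect terms. A direct expansion gives
$$2s\Bigl(1+\alpha+\tfrac{s\alpha^2}{2(s-1)}\Bigr)-(s+1)(1+\alpha)^2=(s-1)-2\alpha+\tfrac{\alpha^2}{s-1},$$
and multiplying through by $s-1>0$ this becomes $(s-1)^2-2(s-1)\alpha+\alpha^2=\bigl(\alpha-(s-1)\bigr)^2\ge 0$, which is exactly the desired inequality. Equality throughout requires $\alpha=s-1$, hence $\beta=\frac{s(s-1)}{2}$, and in particular equality in the hypothesis $\alpha^2/\beta\le\frac{2(s-1)}{s}$; combined with the equality condition in Lemma~\ref{lem:wopt} (forcing $w$ to be a coordinate vector) and with equality for the upper-left $(s-1)\times(s-1)$ block, this traces back to the uniqueness of $N_\mathrm{opt},w_\mathrm{opt}$.

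I do not expect any real obstacle: the only idea needed is the monotonicity-in-$\beta$ reduction, after which the inequality collapses to a perfect square in one line. The only care required is the boundary bookkeeping ($\alpha=0$, $\beta=0$, and the degenerate value $s=1$) and keeping precise track of the equality case for use in the subsequent uniqueness argument.
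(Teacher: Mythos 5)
Your proof is correct, and the verification checks out: the expansion
\begin{equation*}
2s\Bigl(1+\alpha+\tfrac{s\alpha^2}{2(s-1)}\Bigr)-(s+1)(1+\alpha)^2=(s-1)-2\alpha+\tfrac{\alpha^2}{s-1}=\tfrac{1}{s-1}\bigl(\alpha-(s-1)\bigr)^2\ge 0
\end{equation*}
is exactly right, and equality at $\alpha=s-1$, $\beta=\tfrac{s(s-1)}{2}$ recovers the value $\tfrac{2s}{s+1}$. The first step is identical to the paper's: both of you use monotonicity in $\beta$ to replace $\beta$ by its lower bound $\tfrac{s}{2(s-1)}\alpha^2$ and reduce to a one-variable inequality in $\alpha$. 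Where you diverge is the finish. The paper analyzes the resulting rational function by calculus, computing its derivative, locating the critical point $\alpha=s-1$, and comparing boundary values (and in doing so mislabels the critical point as a ``local minimum'' before correctly calling its value the global maximum); your version instead clears denominators and exhibits the difference as a perfect square. Your route is more elementary, one line shorter, avoids the sign-of-derivative bookkeeping entirely, and makes the equality case $\alpha=s-1$ immediate rather than extracted from the critical-point analysis --- which is precisely what the subsequent uniqueness argument for $M_\mathrm{opt}$ needs. Your boundary bookkeeping ($\alpha=0$, the exclusion of $\beta=0$ when $\alpha>0$, and the degenerate $s=1$) is careful and complete.
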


\begin{proof}
$$\frac{(1+\alpha)^2}{1+\alpha+\beta}\leq \frac{(1+\alpha)^2}{1+\alpha+\frac{s}{2(s-1)}\alpha^2}$$
This fraction has the value $1$ at $\alpha=0$, its limit is $\frac{2(s-1)}{s}<\frac{2s}{s+1}$ at $\alpha\to\infty$.
It has only one local minimum on the interval $[0,\infty)$, namely $\alpha=s-1$, because its derivative is:

$$\frac{4(s-1-\alpha)(\alpha+1)(s-1)}{(2s-2a-2+2s\alpha+s\alpha^2)^2}. $$

The value of the function at $\alpha=s-1$ is $\frac{2s}{s+1}$, and that is the global maximum of the function, which proves the lemma.
\end{proof}

One can also calculate the optimal $M_s$ matrix by analyzing the sharpness of our estimations and using the assumptions of the induction, obtaining a bidiagonal matrix: $M_{i,i-1}=-1$, $M_{ii}=1$ are the only nonzero elements of $M$ (this easily follows from the fact that $a^T=(0,0,\ldots,1)$ must hold to have equality in the last estimation).

Thus the step in the mathematical induction is working, we proved that $r\leq 2s$ holds for diagonally implicit Runge--Kutta methods and the optimal method is unique, proving Theorem \ref{thm:DIRK}.

\end{proof}

\section*{Acknowledgment}

This research was supported by the project
T\'AMOP-4.2.2.A-11/1/KONV-2012-0012: Basic research for the development of hybrid and electric vehicles - The Project is supported by the Hungarian Government and co-financed by the European Social Fund.

\bibliography{2s}{}
\bibliographystyle{siam}

\end{document}